\newcommand{\sA}{{\mathcal A}}
\newcommand{\sC}{{\mathcal C}}
\newcommand{\sI}{{\mathcal I}}
\newcommand{\sO}{{\mathcal O}}
\newcommand{\sP}{{\mathcal P}}
\newcommand{\sS}{{\mathcal S}}
\newcommand{\C}{{\mathbb C}}
\renewcommand{\H}{{\mathbb H}}
\renewcommand{\P}{{\mathbb P}}
\newcommand{\Q}{{\mathbb Q}}
\newcommand{\surj}{\twoheadrightarrow}
\numberwithin{equation}{section}
\newtheorem{thm}{Theorem}[section]
\newtheorem{lem}[thm]{Lemma}
\newtheorem{prop}[thm]{Proposition}
{ \theoremstyle{definition}
\newtheorem{Remark}[thm]{Remark} }
\begin{document}

\newcommand{\arXivNumber}{2105.????}

\renewcommand{\thefootnote}{}

\renewcommand{\PaperNumber}{048}

\FirstPageHeading

\ShortArticleName{Double Box Motive}

 \ArticleName{Double Box Motive\footnote{This paper is a~contribution to the Special Issue on Algebraic Structures in Perturbative Quantum Field Theory in honor of Dirk Kreimer for his 60th birthday. The~full collection is available at \href{https://www.emis.de/journals/SIGMA/Kreimer.html}{https://www.emis.de/journals/SIGMA/Kreimer.html}}}

\Author{Spencer BLOCH}

\AuthorNameForHeading{S.~Bloch}

\Address{Department of Mathematics, The University of Chicago,\\ Eckhart Hall, 5734 S University Ave, Chicago IL, 60637, USA}
\Email{\href{mailto:spencer_bloch@yahoo.com}{spencer\_bloch@yahoo.com}}
\URLaddress{\url{http://www.math.uchicago.edu/~bloch/}}

\ArticleDates{Received March 20, 2021, in final form May 04, 2021; Published online May 13, 2021}

\Abstract{The motive associated to the second Symanzik polynomial of the double-box two-loop Feynman graph with generic masses and momenta is shown to be an elliptic curve.}

\Keywords{Feynman amplitude; elliptic curve; double-box graph; cubic hypersurface}

\Classification{14C30; 14D07; 32G20}

\renewcommand{\thefootnote}{\arabic{footnote}}
\setcounter{footnote}{0}

\section{Introduction}\label{S1}

Cubic hypersurfaces in algebraic geometry are sort of~analogous to the baby bear's porridge in~the tale of~Goldilocks and the three bears. Hypersurfaces of~degrees one and two are too cold and uninteresting, while degrees four and higher are too hot to handle. Degree three is just right. The author thanks Dirk Kreimer and Pierre Vanhove for explaining the importance for physics of~two-loop Feynman graphs whose amplitudes are (mixed, or relative) periods of~certain singular cubic hypersurfaces. In particular, the paper~\cite{K} suggests the central role the singularities of~the second Symanzik hypersurface play. The basic setup of~iterated subdivision of~the two-loop sunset grew out of~collaboration with Vanhove~\cite{V}. The kite graph (written $(2,1,2)$ in the notation explained in Appendix~\ref{app}) also leads to an elliptic curve. The situation for the kite is more elementary since one can fall back on the classical theory of~cubic threefolds with isolated double points in~$\P^4$. Vanhove has been able to calculate the $j$-invariant for the resulting elliptic curve. A similar calculation for the double box looks difficult. The author was also influenced by unpublished work of Matt Kerr classifying Feynman motives associated to a~number of~other two-loop diagrams.

This paper focuses on what is perhaps the most striking example, the massive second Syman\-zik motive associated to the double box graph, see Figure~\ref{fig1}.
The massive second Symanzik in our case is a~(singular) cubic hypersurface $X_{3,1,3}$ in $\P^6$. For a smooth cubic hypersurface $X\subset \P^6$, the Hodge structure is the Hodge structure associated with the cohomology in middle dimension, in~this case $H^5(X,\Q)$. It is known \cite[p.~16]{H}, that $H^5(X,\Q)\cong H^1(A,\Q(-2))$ where $A$ is a certain abelian variety of dimension $21$. For the double box hypersurface with generic momenta and masses, $H^5(X_{3,1,3},\Q)$ is a mixed Hodge structure with weights $\le 5$. We will construct a specific resolution of singularities $\pi\colon Z \to X_{3,1,3}$. We will show $F^3H^5(Z) =\C$ and $F^4H^5(Z) =(0)$. From this it will follow that as a Hodge structure, $H^5(Z,\Q) \cong H^1(E,\Q(-2))$ for a suitable elliptic curve $E$. (We do not specify the elliptic curve.) The dimension of the abelian variety drops from $21$ to $1$!

\begin{figure}[ht]\centering
\begin{tikzpicture}
\draw (-3,1) -- (3,1) -- (3,-1) -- (-3,-1) -- (-3,1);
\draw (0,2) -- (0,-2);
\draw (-3.75,1.75) -- (-3,1);
\draw (-3.75,-1.75) -- (-3,-1);
\draw (3.75,-1.75) -- (3,-1);
\draw (3.75,1.75) -- (3,1);
\node at (-1.5,-0.75) {\small 1};
\node at (-1.5,0.75) {\small 3};
\node at (1.5,-0.75) {\small 7};
\node at (-0.15,0) {\small 4};
\node at (1.5,0.75) {\small 5};
\node at (2.85,0) {\small 6};
\node at (-2.85,0) {\small 2};
\end{tikzpicture}
\\[2ex]
\begin{tikzpicture}
\draw (0,1.5) -- (0,-1.5);
\draw (0,1) .. controls (0.75,0.75) and (0.75,-0.75) .. (0,-1);
\draw (0,1) .. controls (-0.75,0.75) and (-0.75,-0.75) .. (0,-1);
\node at (-0.5,-0.85) {\small 1};
\node at (-0.5,0.85) {\small 3};
\node at (0.5,-0.85) {\small 7};
\node at (0.15,0) {\small 4};
\node at (0.5,0.85) {\small 5};
\node at (0.75,0) {\small 6};
\node at (-0.75,0) {\small 2};
\filldraw (0,1) circle (0.04);
\filldraw (0,-1) circle (0.04);
\filldraw (-0.5,-0.4) circle (0.04);
\filldraw (0.5,-0.4) circle (0.04);
\filldraw (0.5,0.4) circle (0.04);
\filldraw (-0.5,0.4) circle (0.04);
\draw (-0.5,-0.4) -- (-1,-0.65);
\draw (0.5,-0.4) -- (1,-0.65);
\draw (0.5,0.4) -- (1,0.65);
\draw (-0.5,0.4) -- (-1,0.65);
\node at (2,0) {\small $=(3,1,3)$};
\node at (-3.0,0.2) {\small $\leftrightarrow$ subdivided};
\node at (-2.6,-0.2) {\small sunset graph:};
\filldraw[transparent] (4.2,0) circle (0.04);
\end{tikzpicture}
\caption{Double box graph.}\label{fig1}
\end{figure}
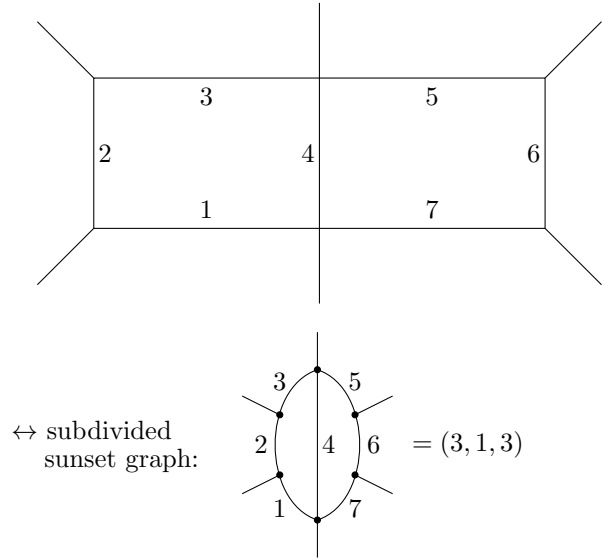

{\samepage From the structure of the map $\pi$ we will deduce our main result
\begin{thm}
With the above notation,
\begin{gather*}
H^1(E,\Q(-2)) \cong {\rm gr}^W_5H^5(X_{3,1,3},\Q).
\end{gather*}
\end{thm}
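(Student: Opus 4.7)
\medskip

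\noindent\emph{Proof strategy.}
The plan is to compare the mixed Hodge structure on the singular cubic $X_{3,1,3}$ with the pure one on the resolution $Z$. Because $X_{3,1,3}$ is projective of dimension $5$, the weights on $H^5(X_{3,1,3},\Q)$ lie in $[0,5]$, whereas $H^5(Z,\Q)$ is pure of weight $5$ since $Z$ is smooth and projective. The morphism $\pi^*$ respects mixed Hodge structures, so it annihilates all weights strictly less than $5$ and factors as
\[
H^5(X_{3,1,3},\Q) \surj {\rm gr}^W_5 H^5(X_{3,1,3},\Q) \hookrightarrow H^5(Z,\Q) \cong H^1(E,\Q(-2)),
\]
where the last isomorphism is the one already obtained from $F^3H^5(Z)=\C$ and $F^4H^5(Z)=0$. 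Thus the theorem reduces to checking that $\pi^*$ is surjective onto $H^5(Z,\Q)$ on the top weight piece.

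To establish surjectivity I would invoke the Mayer--Vietoris (descent) long exact sequence of mixed Hodge structures attached to the cartesian square
\[
\begin{array}{ccc}
E' & \hookrightarrow & Z \\
\big\downarrow & & \big\downarrow\pi \\
S & \hookrightarrow & X_{3,1,3},
\end{array}
\]
with $S={\rm Sing}(X_{3,1,3})$ and $E'=\pi^{-1}(S)$, namely
\[
\cdots \to H^4(E') \to H^5(X_{3,1,3}) \to H^5(Z) \oplus H^5(S) \to H^5(E') \to \cdots .
\]
Applying the exact functor ${\rm gr}^W_5$ and using purity of $H^5(Z)$, the assertion reduces to three vanishings: (i)~${\rm gr}^W_5 H^5(S) = 0$, (ii)~${\rm gr}^W_5 H^5(E') = 0$, and (iii) the image of ${\rm gr}^W_5 H^4(E')$ in ${\rm gr}^W_5 H^5(X_{3,1,3})$ is zero.

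Statements (i) and (iii) should be largely dimensional: once $\dim S$ is checked to be small enough, $H^5(S)$ vanishes outright, and (iii) can be controlled componentwise as soon as $E'$ is presented as a simple normal crossing divisor whose strata are projective bundles over low-dimensional smooth bases. The substantive step is (ii), which demands control of the top weight piece of $H^5$ of the (reducible) four-dimensional divisor $E'$. For this I would decompose $E'$ into its smooth irreducible components $E_\alpha$ together with their iterated intersections, feed the data into Deligne's weight spectral sequence for $H^*(E')$, and check that each $E_\alpha$, arising in the construction as a blow-up or projective bundle over a base of dimension at most $3$, contributes nothing in Hodge bidegrees summing to weight $5$ in total degree $5$.

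The principal obstacle is precisely step (ii): one must verify, blow-up by blow-up, that the iterated construction of $\pi$ produces only exceptional components and intersection strata with vanishing top-weight $H^5$. This is a careful bookkeeping exercise on the explicit resolution, and is where the ``generic masses and momenta'' hypothesis enters, since genericity keeps the singular locus and the exceptional components at their expected minimal complexity. Once (i)--(iii) are in place, the descent sequence forces ${\rm gr}^W_5 H^5(X_{3,1,3},\Q) \cong H^5(Z,\Q) \cong H^1(E,\Q(-2))$, which is the stated isomorphism.
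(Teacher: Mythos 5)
Your route via the Mayer--Vietoris (descent) exact sequence for the square $(E',Z;S,X_{3,1,3})$ is genuinely different from the paper's, which instead runs the Leray spectral sequence for $\pi_Z\colon Z\to X_{3,1,3}$: there one observes that $\pi_{Z,*}\Q_Z=\Q_{X_{3,1,3}}$, so $E_2^{5,0}=H^5(X_{3,1,3},\Q)$, while the other $E_2^{p,q}$ with $p+q=5$ die for purely dimensional reasons (the sheaves $R^q\pi_{Z,*}\Q_Z$ for $q>0$ are supported on the curve $C\amalg C'$ and the finite set $\sS$, and the odd-$q$ stalks come from quadric fibres with vanishing $H^1$ and with $H^3$ only at isolated points). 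The Leray route thus never has to compute $H^5$ of the exceptional divisor. Your descent reduction is structurally sound, and two of your three vanishings are actually cheaper than you indicate: (i) is immediate since $\dim S=1$ forces $H^5(S)=0$, and (iii) is automatic because $E'$ is complete, so Deligne's weight bound gives weights $\le 4$ on $H^4(E')$ and hence ${\rm gr}^W_5 H^4(E')=0$ with no normal-crossings analysis needed. Your whole argument therefore collapses onto (ii), ${\rm gr}^W_5 H^5(E')=0$, which you correctly flag as substantive but do not establish. This is precisely what the Leray approach sidesteps: over $\sS$ the exceptional fibres are smooth $4$-dimensional quadrics with $H^5=0$, but over $C\amalg C'$ the pieces of $E'$ are $3$-dimensional quadric bundles over conics with some singular fibres (and, as the paper remarks, $E\cap Y$ is not even smooth over $C$), so the required weight-$5$ vanishing there needs a real argument which you have only sketched. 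Both you and the paper also rely silently on the injectivity of ${\rm gr}^W_5 H^5(X_{3,1,3},\Q)\hookrightarrow H^5(Z,\Q)$ for a proper modification with smooth source; you assert it at the outset, the paper leaves it implicit, and it is a standard fact about mixed Hodge structures. In short: a legitimate alternative strategy, but one that trades the paper's quick dimensional Leray vanishings for a nontrivial computation on $E'$ that remains open in your write-up.
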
}

The proof is given in three steps.
\begin{enumerate}\itemsep=0pt
\setlength{\leftskip}{0.45cm}
\item[\textit{Step} 1.] We construct the resolution of singularities $Z \to X_{3,1,3}$ and we show $F^3H^5(Z,\C)\cong \C$. This is Theorem~\ref{mainthm} below.

\item[\textit{Step} 2.] We show $F^4H^5(Z,\C) = (0)$, so the Hodge type of $H^5(Z)$ is $(0,0,1,1,0,0)$, from which it follows that $H^5(Z)\cong H^1(E,\Q(-2))$ for a suitable elliptic curve.

\item[\textit{Step} 3.] We show $H^5(Z,\Q)\cong {\rm gr}^W_5H^5(X_{3,1,3},\Q)$.
\end{enumerate}

The study of motives associated to two-loop graphs involves a lot of detailed algebraic geometry. The author can only hope he has gotten the details right! Another approach, which might yield more information in this case about the elliptic curve, could be based on the study of~linear spaces contained in the second Symanzik variety. For example, projecting from a double point on the variety realizes a singular cubic hypersurface of dimension $n$ as (birational to) the blowup of an intersection of a quadric and a cubic hypersurface on~$\P^n$~\cite{C}. When $n=5$, this intersection will be a (singular) Fano threefold, and one may expect a rough dictionary
$$\begin{array}{c}
\text{intermediate jacobian of Fano} \leftrightarrow \text{lines on Fano} \leftrightarrow
\text{planes on cubic fivefold}
\\[.5ex]
\leftrightarrow \text{intermediate jacobian of cubic fivefold.}
\end{array}$$
The weight $5$ piece of $H^5$ for the double box $X$ (with generic parameters) is seen to have Hodge type $(2,3)$, $(3,2)$ which suggests the role of~$\P^2$'s. Namely, we might expect a correspondence of~the form
\begin{gather*}
H^1(C) \xrightarrow{\alpha^*} H^1(P) \xrightarrow{\beta_*} \big(W_5H^5(X)\big)(2).
\end{gather*}
Here $P \xrightarrow{\alpha} C$ is a family of $\P^2$'s parametrized by a curve $C$. A deeper understanding of the elliptic curve arising from $H^5$ of the double box~\eqref{1} should come from a study of the Fano variety of planes on~\eqref{1}. (Presumably this Fano is a surface, see \cite[Section~1]{C}.) This is a~wonderful problem! In the case of the kite graph $(2,1,2)$ one is able to write the elliptic curve as an intersection of two quadrics in $\P^3$. I do not know if similar methods will work for the double box. Another approach, suggested by the referee, would be to study the Picard--Fuchs equation associated to $H^5$ of the double box~\eqref{1}, relating it if possible to the Picard--Fuchs equation for~a~family of elliptic curves.

More generally it would be interesting to generalize motivic considerations to cover all $2$-loop graphs with generic parameters. A physicist in the audience at the Newton institute suggested that the non-planar case would be more subtle. I asked him to email me so we could correspond, but he never did, so I cannot give him credit, but he is certainly correct. A calculation for the graph $(3,2,2)$ shows that $F^3H^5$ (resolution of $(3,2,2)$) has dimension $\ge 5$. On the other hand, the graphs $(n,1,n)$ seem better behaved.

\section[Singularities of X\{3,1,3\}]{Singularities of $\boldsymbol{X_{3,1,3}}$}
We focus now on singularities for the double box (for details, see Proposition~\ref{2symm1n})
\begin{gather}
X_{3,1,3}\colon\ 0=\Psi_{3,1,3}=Q(x_5,x_6,x_7)(x_1+x_2+x_3+x_4)\nonumber
\\ \hphantom{X_{3,1,3}\colon\ 0=\Psi_{3,1,3}=}
{}+Q'(x_1,x_2.x_3)(x_4+x_5+x_6+x_7) +x_4\sA. \label{1}
\end{gather}

Here $\sA$ has degree $2$ and is linear separately in each variable $x_1,\dots,x_7$.
\begin{prop}
The singular locus $X_{3,1,3,{\rm sing}}= C\amalg C'\amalg \sS$, where
\begin{gather*}
C\colon\ Q(x_5,\dots,x_7) = 0 = x_{1}=x_{2}=\cdots = x_{4}, \nonumber
\\
C'\colon\ Q'(x_{1},\dots,x_{3}) = 0 = x_{4}=x_{5}=\cdots = x_{7}.
\end{gather*}
Here $C$ $($resp.~$C')$ is a smooth quadric in $\P^{2}$. Finally $\sS$ is a finite set of ordinary double points on $X_{3,1,3}$ defined by
\begin{gather*}
Q(x_5,x_6,x_7)=Q'(x_1,x_2,x_3)=x_4=x_1+x_2+x_3=x_5+x_6+x_7=\sA=0.
\end{gather*}
The double points $\sS$ are disjoint from $C$ and $C'$.
\end{prop}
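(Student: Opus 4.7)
The plan is a direct Jacobian calculation on $\Psi_{3,1,3}$, organized by the vanishing of $x_4$ and of the linear forms $L := x_1 + x_2 + x_3 + x_4$ and $L' := x_4 + x_5 + x_6 + x_7$. From $\Psi = QL + Q'L' + x_4\sA$ the seven partial derivatives are
\begin{align*}
\partial_i \Psi &= Q + (\partial_i Q') L' + x_4 \partial_i \sA, \qquad i \in \{1,2,3\},\\
\partial_4 \Psi &= Q + Q' + \sA + x_4 \partial_4 \sA,\\
\partial_i \Psi &= (\partial_i Q) L + Q' + x_4 \partial_i \sA, \qquad i \in \{5,6,7\},
\end{align*}
and Euler's identity for the cubic $\Psi$ makes the defining equation $\Psi = 0$ a consequence of simultaneous vanishing of these seven partials.

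First I would restrict to $V = \{x_1 = x_2 = x_3 = x_4 = 0\}$. Since $L$, $Q'$, and each $\partial_i Q'$ for $i \in \{1,2,3\}$ vanish on $V$, the seven partials collapse to $\partial_i \Psi|_V = Q$ for $i \in \{1,2,3\}$, $\partial_i \Psi|_V = 0$ for $i \in \{5,6,7\}$, and $\partial_4 \Psi|_V = Q + \sA|_V$. The key combinatorial identity is $\sA|_V \equiv 0$: for each $\{i, j\} \subset \{5, 6, 7\}$ the coefficient of $x_4 x_i x_j$ in $\Psi_{3,1,3}$ comes from the unique spanning 2-forest with complementary edges $\{4, i, j\}$, whose cut partition depends only on which of $x_5, x_6, x_7$ remains in the forest and therefore agrees with the cut partition of any spanning 2-forest with complementary edges $\{k, i, j\}$, $k \in \{1, 2, 3\}$. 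The latter contributes to the $x_i x_j$ coefficient of $Q$ via the factorization $QL$, so $\partial_{x_4}\Psi|_V = Q$, i.e.\ $\sA|_V = 0$. Consequently $X_{3,1,3,\mathrm{sing}} \cap V = \{Q = 0\} = C$; the symmetric argument on $V' = \{x_4 = x_5 = x_6 = x_7 = 0\}$ identifies $C'$.

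For singular points off $V \cup V'$ I would first show $x_4 = 0$: if $x_4 \ne 0$, the partials $\partial_i \Psi = 0$ express each linear form $\partial_i \sA$ in terms of $Q, Q', L, L'$, yielding an overdetermined system inconsistent with the defining quadratic structure of $\sA$ for generic coefficients. With $x_4 = 0$, multiplying the equations $\partial_i \Psi = 0$ by $x_i$ and summing over $i \in \{1,2,3\}$ and $i \in \{5,6,7\}$, then applying Euler to $Q'$ and $Q$, produces
\begin{gather*}
LQ + 2Q'L' = 0 = 2LQ + Q'L',
\end{gather*}
forcing $LQ = Q'L' = 0$. Of the four resulting cases, only $L = L' = 0$ is compatible with being off $V \cup V'$: if, say, $Q = 0$ with $L \ne 0$, the equations $(\partial_i Q)L + Q' = 0$ for $i \in \{5,6,7\}$ force the three values $\partial_i Q(p)$ to coincide, which, combined with $Q(p) = 0$ and the nondegeneracy of the conic $Q$, yields $(x_5, x_6, x_7) = 0$ and places the point on $V'$; the remaining cases are handled symmetrically. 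Once $x_4 = L = L' = 0$, the partials give $Q = Q' = 0$ directly and $\partial_4 \Psi = Q + Q' + \sA = \sA = 0$, which is exactly the system defining $\sS$; for generic parameters these six equations in $\P^6$ cut out a finite reduced scheme.

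The ordinary-double-point claim at each $p \in \sS$ is a Hessian rank computation: the Hessian of $\Psi$ at $p$ is assembled from the linear parts of $Q, Q'$ at $p$ together with the bilinear structure of $\sA$, and generic masses and momenta make its rank equal to $6$. Disjointness $\sS \cap (C \cup C') = \emptyset$ is then automatic, since any point on $C \cup C'$ lies on a positive-dimensional stratum of $X_{3,1,3,\mathrm{sing}}$ and hence fails to be an ordinary (isolated) double point. The two principal obstacles in the proof are the combinatorial identity $\sA|_V \equiv 0$, which requires careful comparison of 2-forest cut partitions for the double box, and the Hessian nondegeneracy verification at points of $\sS$; the remaining case analysis is routine bookkeeping.
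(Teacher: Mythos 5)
The paper's own proof is essentially a sketch ("the reader can easily check"), so your Jacobian computation is the intended verification; the partial derivatives and the Euler trick giving $QL = Q'L' = 0$ on $\{x_4=0\}$ are correct, and the case analysis that collapses $(b),(c),(d)$ to points of $V\cup V'$ is sound in outline. Two remarks on efficiency before the real issues: the graph-theoretic justification of $\sA|_V\equiv 0$ is unnecessary — from equation \eqref{11a}, $\sA$ is bilinear in $(x_1,\dots,x_4)$ and $(x_4,\dots,x_7)$, so every monomial carries a factor from $\{x_1,\dots,x_4\}$ and the restriction to $V$ vanishes immediately — and the claim "yields $(x_5,x_6,x_7)=0$" in case $Q=0$, $L\neq 0$ needs one more sentence (from $\partial_iQ$ all equal and $Q=0$ you have two linear conditions and one quadratic, vanishing only at the origin for generic $Q$).

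The genuine gaps are the two steps you yourself flag as obstacles, and they are not minor. First, the assertion that $x_4=0$ at any singular point off $V\cup V'$ is not established: "yielding an overdetermined system inconsistent with the defining quadratic structure of $\sA$" is a heuristic, not an argument. When $x_4\ne 0$ (normalize $x_4=1$) the seven equations read $\partial_i\sA = -Q-(\partial_iQ')L'$ for $i\le 3$, $\partial_4\sA = -Q-Q'-\sA$, $\partial_j\sA = -(\partial_jQ)L-Q'$ for $j\ge 5$; you would need to show these, together with the Euler identity $\sum x_i\partial_i\sA = 2\sA$ and the bilinear form of $\sA$, have no solution off the coordinate linear spaces for generic parameters. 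This is exactly the kind of thing that can fail for degenerate momenta, and nothing in your sketch actually rules it out. Second, the ODP claim rests entirely on "generic masses and momenta make the Hessian rank $6$," which is asserted but not computed. At a point $p\in\sS$ the $7\times 7$ Hessian has a specific block structure — the $(i,j)$ blocks with both $i,j\le 3$ or both $i,j\ge 5$ vanish, the cross-block $(i,j)$, $i\le 3,\ j\ge 5$, equals $\partial_jQ(p)+\partial_iQ'(p)$ (a rank-$\le 2$ matrix), and the row/column $4$ carries the contribution of $\sA$ — and showing this reaches rank $6$ is genuinely nontrivial. Moreover your disjointness argument is upside-down: you infer $\sS\cap(C\cup C')=\emptyset$ from the ODP property, but the ODP property is what you still need to prove, so this is not "automatic." Indeed, after imposing all six defining equations, the three quadrics $Q$, $Q'$, $\sA$ in $\P^3_{x_1:x_2:x_5:x_6}$ reduce to binary and bilinear forms, and one should check carefully whether their common zeros actually avoid the lines $x_1=x_2=0$ and $x_5=x_6=0$ (which lie in $C$ and $C'$); that verification is part of proving the proposition, not a corollary of it.
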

\begin{proof} The reader can easily check that points of $C$, $C'$, and $\sS$ are singular on $X_{3,1,3}$. We~can understand points of $\sS$ by first imposing the linear relations $x_4=x_1+x_2+x_3=x_5+x_6$ $+\,x_7=0$,~so
\begin{gather*}
\sS\colon\ Q(x_5,x_6,-x_5-x_6)=Q'(x_1,x_2,-x_1-x_2)
\\ \hphantom{\sS\colon\ Q(x_5,x_6,-x_5-x_6)}
{}=\sA(x_1,x_2,-x_1-x_2,0,x_5,x_6,-x_5-x_6)=0
\end{gather*}
in $\P^3_{x_1,x_2,x_5,x_6}$.
\end{proof}
\begin{Remark}It will be important in the sequel that points in $\sS$ all lie on the hyperplane $x_4=0$.
\end{Remark}
\section{Blowings up}
We blow up $C\amalg C'$ and $\sS$ on $\P^6$ and on $X_{3,1,3}$, obtaining the diagram
\begin{equation*}\qquad\qquad\qquad\qquad\qquad\qquad
\begin{CD}Z @>>> W \\
@VVV @VV\sigma=\text{blow }\sS V \\
Y @>>> V \\
@VVV @VV \pi=\text{blow $C\amalg C'$} V \\
X_{3,1,3} @>>> \P^6
\end{CD}
\end{equation*}
The exceptional divisor in $V$ is written $E\amalg E'$ with
\begin{gather*}
E = \P\big(N^\vee_{C\subset \P^6}\big),\qquad E' = \P\big(N^\vee_{C'\subset \P^6}\big).
\end{gather*}
Both conormal bundles have the form $N^\vee = \sO(-1)^4\oplus \sO(-2)$. Since $\sS$ is disjoint from $E\amalg E'$, the map $\pi\circ\sigma$ is itself a blowup, with exceptional fibre
\begin{gather*}
\amalg_6 \P^5\amalg E\amalg E'.
\end{gather*}
\begin{lem}$Z$ is a smooth divisor on $W$.
\end{lem}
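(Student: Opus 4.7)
The plan is to work locally on $W$, exploiting that $W \to \P^6$ is an isomorphism off the exceptional locus $\amalg_6 \P^5 \amalg E \amalg E'$, where $X_{3,1,3}$ is already smooth. Away from this locus there is nothing to check, so the proof reduces to three local analyses, one near each stratum of $X_{3,1,3,\mathrm{sing}} = C \amalg C' \amalg \sS$. The divisor property is immediate: $Z$ is by construction the strict transform of a hypersurface in $\P^6$, so in any affine chart its ideal is principal, obtained by dividing the pullback of $\Psi_{3,1,3}$ by the appropriate power of the exceptional equation. The real content of the lemma is therefore smoothness of $Z$.

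Near each of the six ordinary double points $p \in \sS$, the ODP hypothesis says that the projectivized tangent cone of $X_{3,1,3}$ at $p$ is a smooth quadric fourfold $Q_p$ sitting inside the exceptional $\P^5 \subset W$ over $p$. The strict transform $Z$ meets that $\P^5$ in exactly $Q_p$ and is smooth in a neighbourhood of $Q_p$ by the standard blow-up computation at a node.

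The substantive case is the blow-up of $C$ (and, symmetrically, of $C'$). Near a point of $C$ with $x_7 \neq 0$, I introduce transverse coordinates $(y_1,y_2,y_3,y_4,u)$ together with a parameter $t$ along $C$, where $u := Q(x_5,x_6,x_7)$ is a local defining function for $C$ inside its ambient $\P^2$. In these coordinates \eqref{1} takes the form
\begin{gather*}
\Psi_{3,1,3} = u\,L(y) + f(y,u,t)\, Q'(y_1,y_2,y_3) + y_4\, \sA_1(y,u,t),
\end{gather*}
with $L = y_1+y_2+y_3+y_4$, $f$ a local unit along $C$, and $\sA_1$ the form $\sA$ rewritten in the new coordinates. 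Because $C$ lies in the singular locus, $\Psi_{3,1,3} \in I_C^2$, which in particular forces $\sA_1|_C \equiv 0$. The transverse quadratic part of $\Psi_{3,1,3}$ along $C$ is then
\begin{gather*}
q(y,u;t) = u\,L(y) + f(0,0,t)\, Q'(y_1,y_2,y_3) + y_4\,\ell(y,u;t),
\end{gather*}
where $\ell$ is a linear form in $(y,u)$. The heart of the argument is to verify that, for generic external data, $q$ is a non-degenerate quadratic form in the five normal variables at every $t \in C$; its projectivization then cuts a smooth quadric threefold in each $\P^4$-fibre of $E = \P(N^\vee_{C \subset \P^6})$, and a routine affine-chart calculation in the blow-up (for instance setting $y_i = u\tilde y_i$ in the chart $u \ne 0$, dividing out $u^2$, and observing that the resulting equation has non-vanishing linear part along the exceptional locus) shows that $Z$ is smooth along $E$. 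The case of $C'$ is identical.

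The main obstacle will be precisely this non-degeneracy statement, namely that $q(\,\cdot\,;t)$ stays non-degenerate for every $t \in C$ (and $t \in C'$). It must be read off from the explicit shape of $\sA$, $Q$ and $Q'$ coming from the double-box Symanzik polynomial, combined with the generic-mass-and-momenta assumption; any degeneration would create an extra singularity of $X_{3,1,3}$ along $C$ and contradict the classification of $X_{3,1,3,\mathrm{sing}}$ in Proposition~2.1.
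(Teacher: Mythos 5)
Your treatment of the ordinary double points in $\sS$ and your reduction of the lemma to the neighbourhood of $E$ and $E'$ both match the paper. But the strategy you propose over $C$ contains a genuine gap. You reduce smoothness of $Z$ along $E$ to the claim that the transverse quadratic form $q(\,\cdot\,;t)$ of $\Psi_{3,1,3}$ along $C$ is non-degenerate at \emph{every} $t\in C$, so that each fibre of $E\cap Y\to C$ is a smooth quadric threefold. That claim is false. The paper points this out explicitly in the Remark inside its proof: in the local model \eqref{6}, $q(y_1+y_2+y_3+y_4)+q'(y_1,y_2,y_3)(1+y_6+y_7)+y_4L(y)$, the coefficient of the $q'$-block is the factor $1+y_6+y_7$, which vanishes at the two points of the conic $C$ lying on the hyperplane $x_5+x_6+x_7=0$. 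At those points the transverse quadratic form degenerates, the quadric fibre is singular, and $E\cap Y\to C$ is \emph{not} a smooth quadric bundle. So the fibrewise argument you want cannot go through.

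Your fallback argument for non-degeneracy is also unsound: you say a degeneration of $q(\,\cdot\,;t)$ would ``create an extra singularity of $X_{3,1,3}$ along $C$ and contradict the classification in Proposition~2.1.'' But $X_{3,1,3}$ is already singular at every point of $C$; a drop in rank of the transverse Hessian at some $t_0\in C$ does not enlarge the singular locus as a set, it only worsens the transverse singularity type there, and Proposition~2.1 describes only the underlying set $X_{3,1,3,\mathrm{sing}}=C\amalg C'\amalg\sS$, not the transverse types. So no contradiction is produced.

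What actually saves the lemma, and what the paper checks, is a weaker statement: $E\cap Y$ is smooth as a subscheme of $E$, not fibrewise smooth over $C$. Concretely, one must verify that the defining equation \eqref{6} of $E\cap Y$ has nonvanishing gradient on $E$ at every point, where the gradient includes the base directions along $C$ (here $\partial/\partial y_6,\ \partial/\partial y_7$). At a point where the fibre quadric is singular (i.e., $1+y_6+y_7=0$), differentiating \eqref{6} along the base produces the term $q'(y_1,y_2,y_3)$, which is generically nonzero on the singular locus of the degenerate fibre quadric; it is this transverse variation, not fibrewise non-degeneracy, that forces $E\cap Y$ to be smooth. Your write-up needs to be reorganized around this global-gradient check rather than a fibre-by-fibre non-degeneracy claim.
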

\begin{proof}The points of $\sS$ are ordinary double points on $X_{3,1,3}$. Since the fibres of $Z$ over these points are smooth quadrics, it suffices to show $Y\cap (E\amalg E')$ is smooth. The situation is symmetric so we can focus on $Y\cap E$. We have $C\colon x_1=x_2=x_3=x_4=Q(x_5,x_6,x_7)=0$. Let $I\subset \sO_{\P^6}$ be the ideal of $C$. Since $\Psi_{3,1,3}$ vanishes to order $2$ along $C$, $\Psi$ defines a map $\sO_{\P^6}(-3) \to I^2$. The image of this map defines a homogeneous sheaf of ideals of degree $2$ for the graded sheaf of~algebras~$\bigoplus I^k/I^{k+1}$ with corresponding variety $E\cap Y$.

To avoid having to keep track of the grading on $I/I^2$, write $y_i=x_i/x_5$. Write $q = Q(x_5,x_6,x_7)/x_5^2$ for the coordinate in the fibre. In contrast $q'(y_1,y_2,y_3)=Q'(x_1,x_2,x_3)/x_5^2$. The defining equation locally looks like
\begin{gather}\label{6}
q(y_1+y_2+y_3+y_4)+q'(y_1,y_2,y_3)(1+y_6+y_7)+y_4L(y_1,y_2,y_3,y_4).
\end{gather}
Here $L$ is defined (compare the definition of $\sA$ in~\eqref{11a}).
\begin{gather*}
L(y_1,y_2,y_3,y_4) = (y_1,y_2,y_3,y_4)
\begin{pmatrix}a_{15}&a_{16}&a_{17} \\a_{25}&a_{26}&a_{27} \\
a_{35}&a_{36}& a_{37}\\ a_{45}&a_{46}& a_{47} \end{pmatrix}
\begin{pmatrix}1\\y_6 \\y_7\end{pmatrix}\!.
\end{gather*}
Note that~\eqref{6} does not vanish to order $\ge 2$ at any point of $E\cap Y$. (It may clarify to remark that $E\cap Y$ is not smooth over $C$. Indeed, at the point of $C$ defined by $1+y_6+y_7=0$, the equation in the fibre becomes $q(y_1+y_2+y_3+y_4)+y_4L(y_1,y_2,y_3,y_4)=0$ and this can vanish to order $2$.)
\end{proof}
Smoothness of $Z$ permits us to calculate the Hodge filtration on $H^6(W-Z)$ using the pole order filtration \cite[Chapter~II, Proposition~3.13]{D}. $W$ is obtained by blowing up points and smooth rational curves on $\P^6$, so all cohomology classes on $W$ are Hodge. In particular the Gysin sequence yields
\begin{gather}\label{8}
F^3H^5(Z) \cong F^4H^6(W-Z) \cong \H^2\big(W,\Omega^4_W(Z) \xrightarrow{d} \Omega^5_W(2Z) \xrightarrow{d}\Omega^6_W(3Z)\big).
\end{gather}

\section{Leray}
To evaluate $F^3H^5(Z)$ in~\eqref{8}, we will use the Leray spectral sequence for $\pi\colon W \to \P^6$.
\begin{prop}\label{vanishing}
We have $R^i\pi_*\Omega^j_W(kZ)=(0)$, $i, k\ge 1$, $j\ge 0$.
\end{prop}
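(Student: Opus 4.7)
The plan is to reduce the vanishing to a fiberwise computation on $\pi\colon W\to \P^6$. Since $\sS$ is disjoint from $C\amalg C'$, this map is the blow-up of $\P^6$ along the smooth (disconnected) center $C\amalg C'\amalg \sS$, with fiber a single point generically, a $\P^5$ over each $s\in \sS$, and a $\P^4$ over each point of $C$ or $C'$. By the theorem on formal functions it suffices to check $H^i(F, \Omega^j_W(kZ)|_F)=0$ for $i\ge 1$ on every positive-dimensional fiber $F\cong \P^n$ (together with the analogous vanishing on its infinitesimal thickenings, where the conormal data prolongs compatibly).

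The key input is positivity of $\sO_W(Z)|_F$. Since $X_{3,1,3}$ has multiplicity $2$ at each ordinary double point of $\sS$, and $\Psi_{3,1,3}$ vanishes to order $2$ along $C$ and along $C'$ (cf.~\eqref{6}), comparison of the total and proper transforms yields
\begin{gather*}
\pi^*\sO_{\P^6}(3) \cong \sO_W(Z)\otimes \sO_W\Bigl(2E+2E'+2\sum\nolimits_{s\in\sS} F_s\Bigr).
\end{gather*}
Restricting to any exceptional fiber $F$, the unique contributing exceptional divisor restricts to $\sO_{\P^n}(-1)$ (the tautological normal bundle) while the others restrict trivially, giving $\sO_W(Z)|_F=\sO_{\P^n}(2)$, so $\sO_W(kZ)|_F$ is ample for $k\ge 1$.

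The restriction $\Omega^j_W|_F$ is filtered via the conormal sequence $0\to N^\vee_{F/W}\to \Omega^1_W|_F\to \Omega^1_F\to 0$ into graded pieces $\wedge^p N^\vee_{F/W}\otimes \Omega^{j-p}_F$. For the $\P^5$-fiber $F=F_s$, $N^\vee_{F/W}=\sO_{\P^5}(1)$. For the $\P^4$-fiber $F=\P^4_c\subset E$, the sequence $0\to N_{F/E}\to N_{F/W}\to N_{E/W}|_F\to 0$ has first term $\sO_{\P^4}$ (trivial, $F$ being a fiber of the projective bundle $E\to C$) and last term $\sO_E(E)|_F=\sO_{\P^4}(-1)$; this extension splits (as $H^1(\P^4,\sO(1))=0$), so $N^\vee_{F/W}=\sO_{\P^4}\oplus \sO_{\P^4}(1)$. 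In every case the graded pieces of $\Omega^j_W|_F$ are sheaves of the form $\Omega^p_{\P^n}(r)$ with $r\ge 0$, and after the strictly positive twist by $\sO_{\P^n}(2k)$, Bott vanishing on projective space gives $H^i(F,\Omega^j_W(kZ)|_F)=0$ for all $i\ge 1$.

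The main obstacle will be making the reduction to fiberwise vanishing rigorous: since $\pi$ is not flat, cohomology and base change does not apply verbatim, and one must extend the argument to infinitesimal thickenings via the formal functions theorem. The conormal filtration and the positivity of $\sO_W(Z)|_F$ persist on such thickenings, so Bott vanishing propagates uniformly, but writing this out carefully is the most delicate part.
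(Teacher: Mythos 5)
Your argument is essentially the same as the paper's: both reduce to the exceptional fibers via Zariski's formal function theorem, both extract the tautological twist $\sO(2)$ from the multiplicity-two vanishing of $\Psi_{3,1,3}$ along $C\amalg C'\amalg\sS$, both filter the restriction of $\Omega^j_W$ to the exceptional locus by conormal sequences, and both finish with Bott vanishing for $\Omega^q_{\P^n}(m)$ with $m\geq 1$. The only distinction is bookkeeping — you restrict to one fiber $F\cong\P^n$ at a time and compute $N^\vee_{F/W}$ explicitly, whereas the paper retains the projective-bundle structure of $E=\P(N^\vee)$ over the center and filters through the relative sheaves $\Omega^q_{\P(N^\vee)/C\amalg C'\amalg\sS}$ — but these produce the same graded pieces and the same vanishing.
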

\begin{proof}For $i\ge 1$ it is clear that the support of $R^i\pi_*\Omega^j_W(kZ)$ is contained in $C\amalg C'\amalg \sS$.
\begin{lem} For $i\ge 1$, $R^i\pi_*\Omega^j_W(kZ)$ is killed by the ideal $J\subset \sO_{\P^6}$ of functions vanishing on~$C\amalg C'\amalg \sS$.
\end{lem}
\begin{proof}[Proof of lemma]
As a consequence of Zariski's formal function theorem,
\begin{gather*}
R^i\pi_*\Omega^j_W(kZ)^\wedge \cong \varprojlim_n R^i\pi_*\big(\Omega^j_W(kZ)\otimes \sO_W/J^n\big).
\end{gather*}
Since completion is faithfully flat, it will suffice to show the individual sheaves $R^i\pi_*\big(\Omega^j_W(kZ)\otimes \sO_W/J^n\big)$ are killed by $J$. For this it will suffice to show
\begin{gather*}
R^i\pi_*\big(\Omega^j_W(kZ)\otimes J^{n-1}/J^n\big) = (0),\qquad i\ge 1,\qquad n\ge 2,\qquad j,k\ge 0.
\end{gather*}
Since $X_{3,1,3}$ vanishes to order $2$ along $C\amalg C'\amalg \sS$, it follows that locally, the divisor $Z\cdot{}$excep\-tional divisor corresponds to the line bundle $\sO_{\P(N^\vee)}(2)$ where $N^\vee$ is the conormal bundle of $C\amalg C'\amalg \sS$. Thus we need to show
\begin{gather*}
R^i\pi_*\big(\Omega_V^j\otimes \sO_{\P(N^\vee)}(2k+n-1)\big)=(0).
\end{gather*}
For $j=0$ this is standard because we are computing cohomology in degree $>0$ of a positive multiple of the tautological bundle on projective space. For $j>0$, we have exact sequences (writing $E$ for the exceptional divisor)
\begin{gather}
0 \to \sO_E(-E) \to \Omega^1_W|E \to \Omega^1_E \to 0,\nonumber
\\
0 \to \Omega^{j-1}_{\P(N^\vee)}(1) \to \Omega^j_W|E \to \Omega^j_{\P(N^\vee)} \to 0. \label{12}
\end{gather}
It therefore suffices to show $R^i\pi_*\big(\Omega^\ell_{\P(N^\vee)}(m)\big)=(0)$ for $i, m\ge 1$, $\ell > 0$. We have a filtration on~$\Omega^\ell_{\P(N^\vee)}$ locally over the base with graded pieces $\Omega^q_{\P(N^\vee)/C\amalg C'\amalg \sS}$, $q=\ell-1, \ell$. The assertion is local on the base, so finally we have to check that $H^i\big(\P^3, \Omega^q_{\P^3}(m)\big)=(0)$ for $i, m\ge 1$. This is standard, because $\Omega_{\P^s}^j$ admits a resolution by direct sums of line bundles $\sO(-r)$ with $r\le s+1$.
\end{proof}
We return now to the proof of Proposition~\ref{vanishing}. Since for $i\ge 1$, $R^i\pi_*\big(\Omega^j_W(kZ)\big)$ is supported on~$C\amalg C'\amalg \sS$, it follows from the lemma that
\begin{gather*}
R^i\pi_*\big(\Omega^j_W(kZ)\big) \cong R^i\big(\pi|_{\pi^{-1}(C\amalg C'\amalg \sS)}\big)_*\big(\Omega^j_W(kZ)\big)|_{(\pi^{-1}(C\amalg C'\amalg \sS)}.
\end{gather*}
Finally, it follows from~\eqref{12} that $R^i\pi_*(\Omega^j_W(kZ)) = (0)$ for $k\ge 1$, proving Proposition~\ref{vanishing}.
\end{proof}

In order to evaluate~\eqref{8}, we need finally to calculate $\pi_*\Omega^{3+m}_W(mZ)$ for $m=1,2,3$. Note that sections of $\pi_*\Omega^{3+m}_W(mZ)$ coincide with sections $\omega$ of $\Omega^{3+m}_{\P^6}(mX)$ such that $\pi^*(\omega)$ has no pole along the exceptional divisor. First consider what happens over the ordinary double points~$\sS$. Here $\Psi_{3,1,3}$ vanishes to order $2$ so a pullback of a section of $\Omega^{3+m}_{\P^6}(mX)$ gets a pole of order~$2m$ along the exceptional divisor coming from the pullback of $mX$. On the other hand if the singular point is defined by $z_1=\cdots =z_6=0$, then writing ${\rm d}z_j = {\rm d}(z_i(z_j/z_i))$ it follows that any form ${\rm d}z_{i_1}\wedge\cdots\wedge {\rm d}z_{i_{3+m}}$ downstairs pulls back to a form vanishing to at least order $2+m$ along the exceptional divisor upstairs. For $m=1,2$ the pole order $2m-(2+m)$ upstairs is $\le 0$ so there is no pole. On the other hand, for $m=3$ we get a pole of order $1$ along the exceptional divisor. Let $\sI(\sS)\subset \sO_{\P^6}$ be the ideal of functions vanishing on $\sS$. Then
\begin{gather*}
\sI(\sS)\Omega^6_{\P^6}(3X)
\end{gather*}
have no poles on the exceptional divisor lying over $\sS$.

Over the curves $C$ and $C'$ local defining equations for the singular set have the form $z_1=\cdots =z_5=0$. A complete set of parameters locally on $\P^6$ becomes $z_1,\dots, z_5,\, t$ for some $t$. Locally, a $4$-form downstairs pulls back to have a zero of order at least $2$ on the exceptional divisor upstairs, from which it follows that
\begin{gather*}
\pi_*\Omega^4_W(Z) = \Omega^4_{\P^6}(X).
\end{gather*}
Sections of $\Omega^4_{\P^6}$ pull back to vanish to order $4$ on the exceptional divisor, so writing $I\subset \sO_{\P^6}$ for the ideal of functions vanishing on $C\amalg C'$, we have
\begin{gather*}
\pi_*\Omega^6_W(3Z) = (\sI(\sS))\cap I^2\Omega^6_{\P^6}(3X).
\end{gather*}

Finally, over $C\amalg C'$, the situation for $\pi_*\Omega^5_W(2X)$ is more complicated. Let $\sI\subset \sO_{\P^6}$ be the ideal of $C\amalg C'$. Write
\begin{gather*}
\Omega^1_{\P^6,\sI}:=\sI\Omega^1_{\P^6}+d\sI = \ker\big(\Omega^1_{\P^6} \to \Omega^1_{C\amalg C'}\big)
\end{gather*}
We have
\begin{gather*}
\pi_*\Omega^5_W(2Z) = \Omega^5_{\P^6,\sI}(2X):=\operatorname{Image}\bigg(\bigwedge^5 \Omega^1_{\P^6, \sI} \to \Omega^5_{\P^6}\bigg)(2X).
\end{gather*}
Since cohomology in degree $>0$ of $\Omega^i_{\P^n}(j)$ vanishes for $j>0$, we conclude in~\eqref{8} that
\begin{gather*}
F^3H^5(Z) \cong
\H^2\big(W, \Omega^4_{\P^6}(X)\to \Omega^5_{\P^6,\sI}(2X)\to \big(\sI(\sS)\cap I^2\big)\Omega^6_{\P^6}(3X)\big) \\ \hphantom{F^3H^5(Z)}
{}\cong\H^1\big(\P^6, \Omega^5_{\P^6,\sI}(2X) \to \big(\sI(\sS)\cap I^2\big)\Omega^6_{\P^6}(3X)\big).
\end{gather*}

\section[The computation of F3H5(Z)]{The computation of $\boldsymbol{F^3H^5(Z)}$}
\begin{thm}
\label{mainthm} With notation as above, we have $F^3H^5(Z) \cong \C$.
\end{thm}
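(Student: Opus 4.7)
The plan is to compute
\begin{gather*}
F^3H^5(Z) \cong \H^1\bigl(\P^6,\, A \to B\bigr),\quad A = \Omega^5_{\P^6,\sI}(2X),\quad B = (\sI(\sS) \cap I^2)\Omega^6_{\P^6}(3X),
\end{gather*}
via the spectral sequence of a two-term complex. Since higher differentials vanish automatically, $\H^1$ fits into the exact sequence
\begin{gather*}
0 \to \operatorname{coker}\bigl(H^0(A)\xrightarrow{d} H^0(B)\bigr) \to \H^1 \to \ker\bigl(H^1(A)\xrightarrow{d} H^1(B)\bigr) \to 0,
\end{gather*}
and the task is to show that the total dimension is $1$.

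First I would pin down the target $H^0(B)$. Under the identification $H^0(\Omega^6_{\P^6}(3X)) \leftrightarrow \{\text{degree-$2$ polynomials}\}$ via $P \mapsto P\,\omega_0/\Psi^3$, the condition $P \in I^2$ is local near $C$ and near $C'$; in degree $2$ it forces $P$ into the monomial spans $\langle x_i x_j : i,j \in \{1,2,3,4\}\rangle$ and $\langle x_i x_j : i,j \in \{4,5,6,7\}\rangle$ respectively, since the quadric generators $Q, Q'$ of the ideals of $C, C'$ contribute only in total degree $\geq 3$. The two $10$-dimensional spans intersect in $\C \cdot x_4^2$, and $x_4^2 \in \sI(\sS)$ is automatic by the Remark (all points of $\sS$ satisfy $x_4 = 0$). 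Hence $H^0(B) = \C \cdot (x_4^2\omega_0/\Psi^3)$ is one-dimensional.

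Next I would analyse the source via the defining short exact sequence
\begin{gather*}
0 \to \Omega^5_{\P^6,\sI}(2X) \to \Omega^5_{\P^6}(2X) \to Q \to 0,
\end{gather*}
where $Q$ is supported on $C \amalg C'$. Locally at a smooth point of $C$ with coordinates $(z_1,\ldots,z_5,t)$ (so $C=\{z_i=0\}$), the quotient consists of $5$-forms of type $f\,dz_I \wedge dt$ with $|I|=4$ modulo $f \in \sI$, yielding $Q|_C \cong \wedge^4 N^\vee_{C/\P^6} \otimes \Omega^1_C \otimes \sO_{\P^6}(6)|_C$. Plugging in the splitting $N^\vee = \sO(-1)^4 \oplus \sO(-2)$ from the paper, together with $\sO_{\P^6}(1)|_C = \sO_C(2)$ (since $C$ is a conic in a $\P^2$) and $\Omega^1_C = \sO_C(-2)$, one computes $Q|_C \cong \sO_C(2) \oplus \sO_C^{\oplus 4}$, hence $H^1(Q|_C) = 0$ and $H^0(Q|_C) = \C^7$ (and similarly for $C'$). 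Combined with Bott vanishing $H^i(\Omega^5_{\P^6}(6)) = 0$ for $i \geq 1$, the long exact sequence presents $H^0(A)$ and $H^1(A)$ as the kernel and cokernel of an explicit restriction $H^0(\Omega^5_{\P^6}(6)) \to H^0(Q|_C)\oplus H^0(Q|_{C'})$.

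The hard step is tracking the differential $d$ on both cohomologies. My expectation is that the generator $x_4^2\omega_0/\Psi^3$ is not in the image of $d\colon H^0(A) \to H^0(B)$, so the cokernel is $\C$, while $d\colon H^1(A) \to H^1(B)$ is injective, so its kernel vanishes; either way, the plan is to pin the two contributions down explicitly. The main obstacle is exhibiting the differential concretely via the shape of $\Psi_{3,1,3}$ in~\eqref{1} and verifying the non-exactness of $x_4^2\omega_0/\Psi^3$: this rests on the double-pole structure along $C \amalg C'$ that the $\sI$ and $I^2$ conditions were precisely designed to encode, and it is what ultimately cuts $\H^1$ down to a single line.
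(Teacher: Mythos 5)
Your setup is correct and matches the paper exactly up to the point where the real work begins. You have the right formula $F^3H^5(Z) \cong \H^1(\P^6, A \to B)$, the short exact sequence for the hypercohomology of a two-term complex, and the computation $H^0(B) = \C\cdot x_4^2$; your identification of the quotient sheaf $Q = \Omega^5_{\P^6}(2X)/A$ restricted to $C$ as $\sO_C(2)\oplus\sO_C^{\oplus 4}$ agrees with the paper's cleaner identification $Q \cong N_{C\amalg C'/\P^6}(-1)$. So you have correctly reduced the problem to two injectivity statements.

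But those two statements are where the entire content of the theorem lives, and you have only ``expected'' them. First, you need $H^0(A)=0$, i.e., the restriction map $H^0(T_{\P^6}(-1))=\C^7 \to H^0(Q)=\C^{14}$ is injective. This is not automatic from the long exact sequence: the paper proves it by observing that even the projection to one component $H^0(N_{C/\P^6}(-1))$, which is also $7$-dimensional, is already an isomorphism (via the Euler sequence and the explicit form of the ideal of $C$). Second, and harder, you need $\ker\big(H^1(A)\to H^1(B)\big)=0$. Since $H^1(A)\cong\C^7$ is not small, this is a genuine claim, and proving it is where the specific shape of $\Psi_{3,1,3}$ in~\eqref{1} enters. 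The paper sets up a commutative ladder (diagram~\eqref{17}) so that injectivity reduces to $\delta_3$ being injective, which by a diagram chase reduces to the sublemma $\operatorname{Image}(\delta_2)\cap\operatorname{Image}(a')=\operatorname{Image}(a'\circ\delta_1)$; that sublemma is established by a careful monomial analysis, sorting the quadric monomials into classes $(123)^2$, $(123)(456)$, $(4)(456)$, $(567)^2$ and checking that no nontrivial cancellation survives. None of this is present in your proposal. As it stands, your argument reduces the theorem to exactly the two lemmas that constitute the paper's proof, without proving either, so there is a genuine gap.
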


\begin{proof}
\begin{lem}\label{firstlem}
$H^0\big( \Omega^5_{\P^6,\sI}(2X)\big)=(0)$.
\end{lem}
\begin{proof}[Proof of lemma] We have $\Omega^5_{\P^6}(2X) \cong T_{\P^6}(-1)$. Also $\sI\Omega^5_{\P^6}\subset \Omega^5_{\P^6,\sI}(2X)$, so
\begin{gather*}
T_{\P^6}|_{C\amalg C'}(-1) \surj \Omega^5_{\P^6}(2X)/\Omega^5_{\P^6,\sI}(2X).
\end{gather*}
Locally $\Omega^5_{\P^6,\sI}$ is generated by $\sI\Omega^5_{\P^6}$ and ${\rm d}z_1\wedge\cdots \wedge {\rm d}z_5$. It follows that
\begin{gather*}
\Omega^5_{\P^6}(2X)/\Omega^5_{\P^6,\sI}(2X) \cong N_{C\amalg C'/\P^6}(-1) = T_{\P^6}(-1)|_{C\amalg C'}/T_{C\amalg C'}(-1).
\end{gather*}
\minCDarrowwidth.6cm
We build a diagram (ignore for the moment the $\delta_i$ and the bottom line which will only be used in the next lemma):
\begin{gather}\label{17}
\begin{CD} 0 @>>> H^0\big(T_{\P^6}(-1)\big) @ >a>> H^0\big(C\amalg C', N_{C\amalg C'/\P^6}(-1)\big) @>b>> H^1\big(\Omega^5_{\P^6, \sI}(6)\big) @>>> 0 \\
@. @VV \delta_1 V @VV \delta_2 V @VV \delta_3 V \\
@. H^0\big(\sO_{\P^6}(2)\big) @>a'>> H^0\big(\sO_{\P^6}/\sI^2(2)\big) @>b'>> H^1\big(\sI^2\sO_{\P^6}(2)\big) @>>> 0
\end{CD}
\end{gather}
Recall that $\sI$ is the ideal of $C\amalg C'$, where $C$ is defined in homogeneous coordinates by $x_1=x_2=x_3=x_4=Q(x_5,x_6,x_7)=0$ and $C'$ is defined by $x_4=x_5=x_6=x_7=Q'(x_1,x_2,x_3)=0$. The kernel of $a'$ above consists of homogeneous polynomials of degree $2$ on $\P^6$ which vanish to degree~$2$ both on $C$ and on $C'$. I.e., $\ker(a') = \C x_4^2$. Also $b'$ is surjective because $H^1\big(\sO_{\P^6}(2)\big)=(0)$ and~$b$ is surjective because $H^1(\P^6, T_{\P^6}(-1))=(0)$.

To see that $a$ is injective, note the Euler sequence
\begin{gather*}
0 \to \sO_{\P^6}(-1) \to \bigoplus_0^6\C\frac{\partial}{\partial x_i} \to T_{\P^6}(-1) \to 0
\end{gather*}
yields
\begin{gather*}
H^0\big(\P^6, T_{\P^6}(-1)\big)= \bigoplus_0^6\C\frac{\partial}{\partial x_i}.
\end{gather*}
Recall $C\colon x_1=x_2=x_3=x_4=Q(x_5,x_6,x_7)=0$. The projection of $a$ onto $N_{C/\P^6}(-1)$ is given by $a(\partial/\partial x_i) = \partial/\partial x_i$, $i=1,2,3,4$ and $a(\partial/\partial x_j) = (\partial Q/\partial x_j)\partial/\partial Q$, $j=5,6,7$. This map is injective. Since $H^0\big( \Omega^5_{\P^6,\sI}(2X)\big)=(0)\subset \ker(a)$, the lemma follows.
\end{proof}
\begin{lem}\label{lem9}
The arrow
\begin{gather*}
H^1\big(\P^6, \Omega^5_{\P^6, \sI}(2X)\big) \xrightarrow{d} H^1\big(\P^6, (\sI(\sS)\cap \sI^2)\Omega^6_{\P^6}(3X)\big)
\end{gather*}
is injective.
\end{lem}
\begin{proof}[Proof of lemma]
The remaining arrows in~\eqref{17} are defined as follows. We identify
\begin{gather*}
H^1\big(T_{\P^6}(-1)\big)= \bigoplus_0^6 \frac{\partial}{\partial x_i}
\end{gather*}
as above, and define
\begin{gather*}
\delta_1\bigg(\frac{\partial}{\partial x_i}\bigg) =\frac{ \partial(\Psi_{3,1,3})}{\partial x_i} \in H^0\big(\sO_{\P^6}(2)\big).
\end{gather*}
Note that in fact $\delta_1(\partial/\partial x_i) \in H^0\big(\sI\sO_{\P^6}(2)\big)$ so we get a well-defined map
\begin{gather*}
T_{\P^6}(-1)|_\sC \to \sO_{\P^6}/\sI^2(2).
\end{gather*}
Tangent vectors along $\sC$ stabilize powers of $\sI$ so in fact the map factors through $N_{\sC/\sP^6}(-1) \to \sO_{\P^6}/\sI^2(2)$, defining $\delta_2$ in~\eqref{17}. Finally $\delta_3$ exists because the top row in~\eqref{17} is exact.

It is straightforward to check that the diagram
$$
\begin{CD}
H^1\big(\P^6, \Omega^5_{\P^6, \sI}(2X)\big)@>d>> H^1\big(\P^6, \big(\sI(\sS)\cap \sI^2\big)\Omega^6_{\P^6}(3X)\big) \\
@VV \delta_3 V @VVV \\
H^1\big(\sI^2\sO_{\P^6}(2)\big) @= H^1\big(\P^6, \sI^2\Omega^6_{\P^6}(3X)\big)
\end{CD}
$$
is commutative, so the lemma will follow if we show $\delta_3$ is injective. It is convenient to isolate this statement as a separate sublemma.
\begin{lem}[sublemma]\label{sublem10}
With reference to diagram~\eqref{17}, we have
\begin{gather*}
\operatorname{Image}(\delta_2)\cap \operatorname{Image}(a') = \operatorname{Image}(a'\circ \delta_1).
\end{gather*}
As a consequence, $\delta_3$ is injective.
\end{lem}
\begin{proof}
Fix $C\colon x_1=x_2=x_3=x_4=Q(x_5,x_6,x_7)=0$ to be one component of $\sC$. Let $I\supset \sI$ be the ideal of $C\subset \sC$. The normal bundle with a $-1$ twist is $N_{C/\P^6}(-1) \cong \sO_C^4\oplus \sO_C(1)$. Here we have to be careful because $\sO_C(1)$ is a line bundle of degree $2$ on the conic $C$ so $h^0(\sO_C(1))=3$ and the composition
\begin{gather*}
H^0\big(T_{\P^6}(-1)\big) \xrightarrow{a} H^0\big(\sC,N_{\sC/\P^6}(-1)\big) \xrightarrow{\text{proj}} H^0\big(C,N_{\sC/\P^6}(-1)\big)
\end{gather*}
is an isomorphism of vector spaces of dimension $7$.

Suppose now we have in the sublemma that $\delta_2(u)=a'(v)$. By the above, we can modify~$u$ (resp.~$v$) by some $a(w)$ (resp.~$\delta_1(w)$) so that $u$ is trivial on $C$, that is $a'(v) \in I^2(2)=(x_1,x_2,x_3,x_4)^2$. By assumption, $a'(v) \in \delta_2(H_{C'/\P^6}(-1))$ which means we can write (here $\Psi:=\Psi_{3,1,3}$)
\begin{gather}
a'(v) = c_4\frac{\partial\Psi}{\partial x_4}+\cdots +c_7\frac{\partial \Psi}{\partial x_7}
+c_8\frac{\partial Q'}{\partial x_1}\frac{\partial \Psi}{\partial Q'}+
c_9\frac{\partial Q'}{\partial x_2}\frac{\partial\Psi}{\partial Q'} + c_{10}\frac{\partial Q'}{\partial x_3}\frac{\partial \Psi}{\partial Q' }\nonumber
\\ \hphantom{a'(v)}
{}=c_4\frac{\partial \Psi}{\partial x_4}+\cdots +c_7\frac{\partial \Psi}{\partial x_7}
+ \bigg(c_8\frac{\partial Q'}{\partial x_1}+c_9\frac{\partial Q'}{\partial x_2}+c_{10}\frac{\partial Q'}{\partial x_3}\bigg)(x_4+x_5+x_6+x_7).\label{21}
\end{gather}
We want to show this expression cannot be a non-trivial quadric in $x_1,\dots,x_4$. The monomials which appear fall into $4$ classes:
\begin{gather*}
(123)^2, \qquad (123)(456),\qquad (4)(456),\qquad (567)^2.
\end{gather*}
(Here, e.g., $(123)^2$ refers to monomials $x_ix_j$ with $1\le i, j\le 3$.) The only terms in $(567)^2$ appear in~\eqref{21} with coefficient $c_4$ so we must have $c_4=0$. We have for $i=5,6,7$
\begin{gather*}
\frac{\partial \Psi}{\partial x_i} = \frac{\partial Q}{\partial x_i}(x_1+x_2+x_3+x_4)+Q'+x_4\frac{\partial \sA}{\partial x_i}.
\end{gather*}
The terms $Q'$ and $x_4\partial \sA/\partial x_i$ lie in $(1234)^2$. For the terms in~\eqref{21} not in $(1234)^2$ to cancel we must have for some constant $K$
\begin{gather}
c_5\frac{\partial Q}{\partial x_5} + \frac{c_6\partial Q}{\partial x_6} +\frac{c_7\partial Q}{\partial x_7}=K(x_5+x_6+x_7),\nonumber
\\
c_8\frac{\partial Q'}{\partial x_8}+c_9\frac{\partial Q'}{\partial x_9}+c_{10}\frac{\partial Q'}{\partial x_{10}}=-K(x_1+x_2+x_3).\label{22}
\end{gather}
It follows from~\eqref{21} and~\eqref{22} that $a'(v)$ in~\eqref{21} will involve a term $K(x_5+x_6+x_7)$ which does not cancel. This proves Lemma~\ref{sublem10}.
\end{proof}
The assertion (needed to prove Lemma~\ref{lem9}) that $\delta_3$ is injective is now just a diagram chase.
\end{proof}
\begin{lem}
\label{lem11}
$H^0\big(\P^6, I(\sS)\cap \sI^2\Omega^6_{\P^6}(3X_{3,1,3})\big) \cong \C$.
\end{lem}
\begin{proof}[Proof of lemma]
We will show that $H^0\big(\P^6, \sI^2\Omega^6_{\P^6}(3X_{3,1,3})\big) \cong \C$ and that the generating section also vanishes on points of $\sS$. In fact, $\sI^2\Omega^6_{\P^6}(3X_{3,1,3})\cong \sI^2_{\P^6}(2)$. Note $\sI^2=(I_C\cap I_{C'})^2$, so~we are looking for homogenous forms of degree $2$ which vanish to order 2 on $C$ and on $C'$. The only such forms lie in $\C\cdot x_4^2$. Finally, note that $x_4$ vanishes on $\sS$.
\end{proof}
Theorem \ref{mainthm} follows by combining Lemmas \ref{firstlem}, \ref{lem9}, and \ref{lem11}.
\end{proof}

\section{The Hodge structure in more detail}
In this section, we justify steps $2$ and $3$ from the introduction. A general reference is~\cite{PS}. 
\begin{prop}
With notation as above, we have $F^4H^5(Z,\C) = (0).$
\end{prop}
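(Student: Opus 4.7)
The plan is to mimic the computation of $F^3H^5(Z)$ one step higher in the Hodge filtration. By the pole order description of the Hodge filtration on the complement of the smooth divisor $Z$ \cite[Chapter~II, Proposition~3.13]{D} together with the Gysin sequence,
$$F^4H^5(Z) \cong F^5H^6(W\setminus Z) \cong \H^1\big(W,\Omega^5_W(Z) \xrightarrow{d} \Omega^6_W(2Z)\big).$$
Proposition~\ref{vanishing} and the Leray spectral sequence for $\pi\colon W \to \P^6$ reduce this to
$$F^4H^5(Z) \cong \H^1\big(\P^6,\ \pi_*\Omega^5_W(Z) \to \pi_*\Omega^6_W(2Z)\big).$$

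The main task is to identify the two pushforward sheaves, and this is the step I expect to be the principal obstacle. I would carry it out by the same local pole-versus-vanishing bookkeeping along the exceptional divisors that was used in the pushforward analysis preceding Theorem~\ref{mainthm}. Over $C\amalg C'$ the pullback of a $5$-form on $\P^6$ vanishes to order at least $3$ on the exceptional divisor and the pullback of a $6$-form to order $4$, while $\pi^*X$ contributes poles of orders $2$ and $4$ respectively; over a point of $\sS$ the analogous vanishing orders are $4$ and $5$, against the same poles. In every case the net order is nonnegative, so no refinement is required and
$$\pi_*\Omega^5_W(Z) = \Omega^5_{\P^6}(X), \qquad \pi_*\Omega^6_W(2Z) = \Omega^6_{\P^6}(2X).$$
This is precisely why the $F^4$ picture is cleaner than the $F^3$ picture: the analogous counts at $F^3$ are just tight enough to force the refinements used there, whereas here there is room to spare.

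The remaining cohomology computation on $\P^6$ is then immediate. One has $\Omega^6_{\P^6}(2X)\cong \sO_{\P^6}(-1)$, with vanishing $H^0$ and $H^1$. Using $\Omega^6_{\P^6}\cong \sO_{\P^6}(-7)$ gives $\Omega^5_{\P^6}(X)\cong T_{\P^6}(-4)$, and the Euler sequence
$$0\to \sO_{\P^6}(-4) \to \sO_{\P^6}(-3)^{\oplus 7} \to T_{\P^6}(-4) \to 0$$
together with the standard vanishings of $H^i\big(\P^6,\sO(-3)\big)$ and $H^{i+1}\big(\P^6,\sO(-4)\big)$ for $0\le i\le 5$ yield $H^0\big(T_{\P^6}(-4)\big) = H^1\big(T_{\P^6}(-4)\big) = 0$. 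Every $E_1$ term contributing to $\H^1$ of the two-term complex on $\P^6$ then vanishes, so $F^4H^5(Z) = 0$.
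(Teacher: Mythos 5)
Your proof is correct, and it follows a genuinely different route from the paper's.

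The paper establishes the same reduction
$F^4H^5(Z)\cong \H^1\big(W,\Omega^5_W(Z)\to\Omega^6_W(2Z)\big)$
and then works directly on~$W$: it observes that $H^0\big(W,\Omega^6_W(2Z)\big)=0$ because such a section would give a section of $\Omega^6_{\P^6}(2X)\cong\sO_{\P^6}(-1)$, and it proves $H^1\big(W,\Omega^5_W(Z)\big)=0$ by Serre duality, reducing to $H^5\big(W,\Omega^1_W(-Z)\big)=0$, which it establishes by decomposing $\Omega^1_W$ via the exact sequence $0\to\pi^*\Omega^1_{\P^6}\to\Omega^1_W\to\Xi\to 0$ and computing direct images of line bundles on the exceptional divisors.

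You instead push the whole two-term complex down to $\P^6$ via Leray and Proposition~\ref{vanishing}, exactly as in the $F^3$ computation. The key content of your argument is the identification $\pi_*\Omega^5_W(Z)=\Omega^5_{\P^6}(X)$ and $\pi_*\Omega^6_W(2Z)=\Omega^6_{\P^6}(2X)$ with no ideal-sheaf corrections, which you justify by the pole-versus-vanishing count along the exceptional divisors. That count is right: over $C\amalg C'$ (codimension $5$) a $5$-form pulls back to vanish to order $\ge 3$, a $6$-form to order $4$, against poles $2$ and $4$; over $\sS$ (codimension $6$) the vanishing orders are $4$ and $5$. In every case the net order is $\le 0$, and the equality (not just containment) of pushforwards follows because both sides agree away from a codimension-$\ge 2$ locus and $\Omega^k_{\P^6}(jX)$ is locally free. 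The rest is the Euler sequence. Your route is cleaner in that it parallels the $F^3$ analysis and stays on $\P^6$, replacing Serre duality on $W$ and the structure of $\Omega^1_W$ with a second, easier, round of the pushforward bookkeeping already developed for $F^3$; the paper's route avoids re-running that bookkeeping at the cost of a more involved direct-image calculation on the exceptional divisors. Both are sound.
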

\begin{proof} Again from \cite[Chapter~II, Proposition~3.13]{D}, we have
\begin{gather*}
\H^1\big(W,\Omega^5_W(Z) \xrightarrow{d} \Omega^6_W(2Z)\big)\cong F^5H^6(W-Z,\C) \cong F^4H^5(Z,\C).
\end{gather*}
It will suffice to show
\begin{gather*}
H^0\big(W,\Omega^6_W(2Z)\big) = 0= H^1\big(W,\Omega^5_W(Z)\big).
\end{gather*}
$Z$ is birational with the cubic $X_{3,1,3}\subset \P^6$, and a section of $\Omega^6_W$ with only a pole of order $2$ along $Z$ would give rise to a section of $\Omega^6_{\P^6}$ with only a pole of order $2$ along $X_{3,1,3}$. Since $\Omega^6_{\P^6}\cong \sO_{\P^6}(-7)$ there are no such sections.

It remains to show $H^1\big(W, \Omega^5_W(Z)\big)=(0)$. By duality, it suffices to show $H^5\big(W, \Omega^1_W(-Z)\big)$ $=(0)$. Recall $W$ is obtained from $\P^6$ by blowing up $C\amalg C' \amalg \sS$. The exceptional divisors are~$E$,~$E'$, $\coprod_i \P^5 \subset W$. Define $\Xi=\Omega^1_W/\pi^*\Omega^1_{\P^6}$. We claim
\begin{gather*}
\Xi \cong \Omega^1_{E/C}\oplus \Omega^1_{E'/C'}\oplus \bigoplus_i\Omega^1_{\P^5}.
\end{gather*}

Indeed there is a natural surjective map just by restricting $1$-forms from $W$ to the exceptional divisors. Consider the local structure along $E$. Locally $C\colon z_1=\cdots =z_5=0$ in $\P^6$. Locally upstairs, we have $E\colon z_1=0$ and $\Omega^1_W=\pi^*\Omega^1_{\P^6}+\sum \sO_W{\rm d}(z_i/z_1)$. Note
\begin{gather*}
z_1{\rm d}(z_i/z_1)={\rm d}z_i-(z_i/z_1){\rm d}z_1 \in \pi^*\Omega^1_{\P^6}.
\end{gather*}
This implies that $\pi^*\Omega^1_{\P^6}\supset \sO_W(-E)\Omega^1_W$, so $\Omega^1_W|_E \surj \Xi$. Also the conormal bundle $N^\vee(E/W)= \sO_W(-E)/\sO_W(-2E)$ of $E\subset W$ lies in $\Omega^1_W|_E$. Similarly, the conormal bundle $N^\vee_{C/\P^6}$ lies in~$\Omega^1_{\P^6}|_C$, and the pullback $\pi^*N^\vee_{C/\P^6} \to N^\vee(E/W)$ is surjective. It follows that
\begin{gather*}
\Omega^1_E = \big(\Omega^1_W|_E\big)/N^\vee(E/W) \surj \Xi.
\end{gather*}
The proof of~\eqref{24} is now straightforward.

We have the exact sequence
\begin{gather} \label{24}
0 \to \pi^*\Omega^1_{\P^6}(-Z) \to \Omega^1_W(-Z) \to \Xi(-Z) \to 0.
\end{gather}
In the Picard group of $W$ we have $-Z=-\pi^*(X_{3,1,3})+2E+2E'+2\sum_{s\in \sS}\P^5_s$. We compute the direct images
\begin{gather}\label{25}
{\mathbf R}\pi_*\big(\pi^*\Omega^1_{\P^6}(-Z)\big) = \Omega^1_{\P^6}(-3)\otimes {\mathbf R}\pi_*\bigg(\sO_W\bigg(2E+2E'+2\sum_{s\in \sS}\P^5_s\bigg)\!\bigg).
\end{gather}
The calculus of intersection theory on blowups \cite[Section~II.7]{Ha}, yields $E\cdot E = [\sO_E(-1)]$, the dual of the tautological relatively ample bundle on the projective bundle $E$ over $C$ (and similarly for~$E'$ and the $\P^5_s$). Applying $\mathbf R\pi_*$ to the exact sequences
\begin{gather*}
0 \to \sO_W\Big(E+E'+\sum \P^5_s\Big) \to \sO_W\Big(2E+2E'+2\sum \P^5_s\Big)
\\ \phantom{0}
 \to \sO_E(-2)\oplus \sO_{E'}(-2) \oplus \bigoplus_{\sS}\sO_{\P^5}(-2) \to 0
\end{gather*}
and the similar sequence
\begin{gather*}
0 \to \sO_W \to \sO_W\Big(E+E'+\sum \P^5_s\Big) \to \sO_E(-1)\oplus \sO_{E'}(-1)+\sum \sO_{\P^5_s}(-1) \to 0
\end{gather*}
and recalling that for a projective space bundle $\P \xrightarrow{\pi}S$ with fibre $\P^m$ we have ${\mathbf R}\pi_*\sO_{\P}(-n)=(0)$ for $1\le n\le m$, we can see that
\begin{gather}\label{27}
{\mathbf R}\pi_*\bigg(\sO_W\bigg(2E+2E'+2\sum_{s\in \sS}\P^5_s\bigg)\!\bigg) \cong {\mathbf R}\pi_*(\sO_W).
\end{gather}
We want to use these results to prove $H^5\big(W, \Omega_W^1(-Z)\big) = (0)$. From~\eqref{24}, it will suffice to show
\begin{gather*}
H^5\big(W, \pi^*\Omega^1_{\P^6}(-Z)\big) = (0) = H^5(W,\Xi(-Z)).
\end{gather*}
From~\eqref{25} and~\eqref{27} we get
\begin{gather}\label{28}
H^5\big(W, \pi^*\Omega^1_{\P^6}(-Z)\big) =H^5\big(\P^6,\Omega^1_{\P^6}(-3)\otimes {\mathbf R}\pi_*\sO_W\big).
\end{gather}
We claim $\mathbf R\pi_*(\sO_W) \cong \sO_{\P^6}$, i.e., $R^i\pi_*\sO_W=(0)$ for $i\ge 1$. This can be checked locally on $\P^6$, so for notational simplicity we assume $W$ is $\P^6$ blown up along $C$. (Of course, $R^0\pi_*\sO_W=\sO_{\P^6}$. Also $R^i\pi_*(\sO_E)=(0)$ for $i\ge 1$.) The ideal sheaf $\sO_W(-E)$ is relatively ample (a basic fact about blowing up \cite[Section~II.7]{Ha}) so $R^i\pi_*(\sO_W(-nE))=(0)$ for $n\gg0$ and $i\ge 1$. We have $\big($note $E\cdot E = [\sO_E(-1)]\big)$
\begin{gather*}
R^i\pi_*\big(\sO_W(-nE)\big) \to R^i\pi_*\big(\sO_W(-(n-1)E)\big) \to R^i\pi_*\big(\sO_E(n-1)\big).
\end{gather*}
By descending induction we conclude $R^i\pi_*(\sO_W(-nE))=(0)$ for $n\ge 0$ and $i\ge 1$.

Finally, from~\eqref{28} we get
\begin{gather*}
H^5\big(W, \pi^*\Omega^1_{\P^6}(-Z)\big) = H^5\big(\P^6,\Omega^1_{\P^6}(-3)\big).
\end{gather*}
Dualizing and twisting the Euler sequence yields
\begin{gather*}
0 \to \Omega^1_{\P^6}(-3) \to \bigoplus_7 \sO_{\P^6}(-4) \to \sO_{\P^6}(-3) \to 0,
\end{gather*}
from which we conclude the desired vanishing for $H^5\big(W, \pi^*\Omega^1_{\P^6}(-Z)\big)$.

We need finally to check vanishing for $H^5(W, \Xi(-Z))$, with $\Xi$ as in~\eqref{24}. It suffices to show for example ${\mathbf R}\pi_*\big(\Omega^1_{E/C}(-2)\big)=(0)$. We have $0 \to \Omega^1_{E/C} \to \bigoplus_5 \sO_E(-1) \to \sO_E \to 0$, and the sheaves $\sO_E(-2)$ and $\sO_E(-3)$ are fibrewise $\sO_{\P^4}(-k), k=2, 3$ and have vanishing cohomology along the fibres in all degrees.
\end{proof}
\begin{prop}[verification of Step $3$ at the end of Section~\ref{S1}]
We have
\begin{gather*}
H^5(Z,\Q) \cong {\rm gr}^W_5H^5(X_{3,1,3},\Q).
\end{gather*}
\end{prop}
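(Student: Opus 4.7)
The plan is to derive the isomorphism from the Mayer--Vietoris long exact sequence of mixed Hodge structures associated to the resolution $\pi\colon Z\to X_{3,1,3}$. Writing $\Sigma=C\amalg C'\amalg\sS$ for the singular locus and $D=\pi^{-1}(\Sigma)\subset Z$ for the reduced exceptional divisor, the Cartesian blow-up square
\[
\begin{CD}
D @>>> Z \\
@VVV @VV\pi V \\
\Sigma @>>> X_{3,1,3}
\end{CD}
\]
yields a long exact sequence of mixed Hodge structures (cf.~\cite{PS})
\[
\cdots\to H^4(D)\xrightarrow{\delta} H^5(X_{3,1,3})\to H^5(Z)\oplus H^5(\Sigma)\to H^5(D)\to\cdots.
\]
Since $\dim\Sigma\le 1$ one has $H^5(\Sigma)=0$, and every component of $D$ is smooth projective (by the smoothness analysis of Section~3), so each $H^k(D)$ is pure of weight $k$. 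Hence the image of $\delta$ lies in $W_4H^5(X_{3,1,3})$ and dies in $\operatorname{gr}^W_5$, and the proposition reduces to the single vanishing $H^5(D)=0$.

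To establish that vanishing, decompose $D=D_C\amalg D_{C'}\amalg\coprod_{s\in\sS}Q_s$, where $Q_s$ is the smooth exceptional quadric fourfold over the ordinary double point $s\in\sS$, and $D_C=Y\cap E$ (resp.\ $D_{C'}$) is a smooth divisor in the $\P^4$-bundle $E=\P(N^\vee_{C\subset\P^6})\to C\cong\P^1$. Each $Q_s$ is a smooth quadric of even complex dimension, so $H^5(Q_s)=0$ automatically; by the evident $C\leftrightarrow C'$ symmetry it remains to show $H^5(D_C)=0$.

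I would prove this via the Leray spectral sequence for the quadric-threefold bundle $p\colon D_C\to C$. Every (possibly singular) quadric threefold in $\P^4$ has vanishing odd cohomology --- ranks $5$, $4$, $3$ and $2$ all satisfy $H^{2k+1}=0$, as one checks via Lefschetz from $\P^4$ on the smooth locus combined with a closed--open Gysin sequence over the singular stratum --- so $R^qp_*\Q=0$ for $q$ odd. Since $\P^1$ has cohomological dimension $2$, the only contribution to $H^5(D_C)$ on the $E_2$-page with $p+q=5$, $p\le 2$, and $q$ even is at $(p,q)=(1,4)$. Moreover the restriction of $\zeta^2\in H^4(E)$ to $D_C$ defines a global section of $R^4p_*\Q$ whose value at each fibre $Q_c$ is a nonzero multiple of the generator of $H^4(Q_c)\cong\Q$, giving a stalkwise (hence global) isomorphism $\Q_{\P^1}\xrightarrow{\sim} R^4p_*\Q$. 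Therefore $H^1(\P^1,R^4p_*\Q)=H^1(\P^1,\Q)=0$ and finally $H^5(D_C)=0$.

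The main obstacle is this Leray-sheaf computation, in particular the identification of $R^4p_*\Q$ with the constant sheaf; once that is in hand, the passage from $H^5(D)=0$ to the required Hodge-theoretic isomorphism is a routine consequence of strictness of morphisms of mixed Hodge structures in the Mayer--Vietoris sequence.
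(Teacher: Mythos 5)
Your strategy diverges from the paper's. The paper runs the Leray spectral sequence directly for $\pi_Z\colon Z\to X_{3,1,3}$, argues that the only possibly nonzero odd direct image is a skyscraper $R^3$, and concludes that $E_2^{5,0}=H^5(X_{3,1,3},\Q)$ is the only term feeding $H^5(Z)$; purity of $H^5(Z)$ then gives both the factorization through ${\rm gr}^W_5$ and injectivity. You instead invoke the descent Mayer--Vietoris sequence of the resolution square to reduce the whole statement to the single vanishing $H^5(D)=0$, and then attack $H^5(D_C)$ by a second Leray spectral sequence for the quadric-threefold bundle $p\colon D_C\to C\cong\P^1$. That reduction is correct (and in fact makes explicit some bookkeeping the paper leaves terse), and both routes ultimately hinge on the same geometric input about quadric fibres over the singular conic.

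There is, however, a genuine error in your Leray-sheaf computation. The claim that $H^4(Q_c)\cong\Q$ for every fibre, and hence that $\zeta^2$ gives a \emph{stalkwise isomorphism} $\Q_{\P^1}\xrightarrow{\ \sim\ }R^4p_*\Q$, is false. The paper itself notes, in the proof that $Z$ is a smooth divisor in $W$, that $D_C=Y\cap E$ is \emph{not} smooth over $C$: at the points of $C$ with $1+y_6+y_7=0$ the fibre degenerates to a quadric threefold of rank $\le 4$, and a rank-$4$ (or rank-$2$) quadric threefold has $H^4\cong\Q^2$. So $R^4p_*\Q$ has jumping stalks and is not a constant sheaf. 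Your desired conclusion $H^1\big(C,R^4p_*\Q\big)=0$ does remain true, but for a slightly different reason: the section $\zeta^2$ is nonzero in \emph{every} stalk (since $\zeta^3=\deg Q_c=2\ne 0$ for any quadric threefold in $\P^4$), hence gives an \emph{injection} $\Q_C\hookrightarrow R^4p_*\Q$ whose cokernel is a skyscraper supported on the finitely many degenerate fibres; the long exact sequence then yields $H^1\big(C,R^4p_*\Q\big)=0$ because $H^1(\P^1,\Q)=0$ and $H^1$ of a skyscraper vanishes. With that repair your argument goes through; without it the step as written is incorrect, and the incorrect statement is explicitly contradicted by the paper's own local analysis of $Y\cap E$ over $C$.
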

\begin{proof}
We consider the Leray spectral sequence (writing $\pi_Z=\pi|_Z\colon Z \to X_{3,1,3}$).
\begin{gather*}
E_2^{p,q} = H^p\big(X_{3,1,3},R^q\pi_{Z,*}\Q_Z\big) \Rightarrow H^{p+q}(Z,\Q).
\end{gather*}
The fibres of $\pi_Z$ are connected, so $\pi_*\Q_Z \cong \Q_{X_{3,1,3}}$ and $E_2^{5,0}=H^5(X_{3,1,3},\Q)$. We can stratify $X_{3,1,3}$ so the fibres consist either of one point or a (possibly singular) $3$-dimensional complex quadric (over $C\amalg C'$) or a $4$-dimensional complex quadric (over the isolated singular set $\sS$). The only odd $q\le 5$ representing a possible non-zero cohomology of a fibre is $H^3$ which can occur in isolated singular fibres for the $3$-dimensional quadric over $C\amalg C'$. But then $H^2\big(R^3\big)=(0)$. Thus, the only $E_2^{p,q}$ which contributes to $H^5(F)$ is $E_2^{5,0} = H^5(X_{3,1,3},\Q) \to H^5(F,\Q)$. This map is necessarily surjective. It factors through ${\rm gr}^W_5H^5(X_{3,1,3},\Q)$, proving Step~3.
\end{proof}

\appendix
\section{The symanzik polynomials}\label{app}
Let $(m,1,n)$ denote the $2$-loop graph obtained by subdividing the $3$ edges of the sunset graph $(1,1,1)$ into $m$ (resp.~1, resp.~$n$) edges. We associate edge variables $x_1,\dots,x_m$ (resp.~$x_{m+1}$, resp.~$x_{m+2},\dots,x_{m+n+1}$) to the subdivided edges in the evident way. The first Symanzik polynomial is the quadratic polynomial
\begin{gather*}
\phi_{m,1,n}(x_1,\dots,x_m,x_{m+1},x_{m+2},\dots,x_{m+n+1})= \sum x_ix_j
\end{gather*}
with the sum being taken over all pairs $x_i\neq x_j$ such that cutting edges $i$ and $j$ renders the graph a tree. One checks
\begin{gather*}
\phi_{m,1,n} = x_{m+1}(x_1+\cdots + x_m+x_{m+2}+\cdots + x_{m+n+1}) + \bigg(\sum_{i=1}^mx_i\bigg)\bigg(\sum_{j=m+2}^{m+n+1}x_{j}\bigg).
\end{gather*}

The (massless) second Symanzik polynomial $\psi_{m,1,n}$ has degree $3$ in the edge variables. The coefficients are quadratic in the momentum flows $w_\mu \in \C^D$ where $\mu$ runs over the vertices of the graph. A monomial $x_ix_jx_k$ appears in~$\psi_{m,1,n}$ if cutting the edges $i$, $j$, $k$ in $(m,1,n)$ reduces the graph to a $2$-tree. Here $2$-tree means a subgraph with no loops and exactly $2$ connected components. (Connected components may be isolated vertices.) The coefficient of~$x_ix_jx_k$ in~$\psi_{m,1,n}$ is computed as follows. Let $C$ be one of the two connected components of the cut graph $(m,1,n)-\{i,j,k\}$. The coefficient of $x_ix_jx_k$ in $\psi_{m,1,n}$ is
\begin{gather*}
c_{i,j,k} = \bigg(\sum_{\mu \in \text{vert}(C)} w_\mu\bigg)^2.
\end{gather*}
Here all vertex momenta are oriented to point inward, and the square refers to the quadratic form on $\C^D$. Because the sum of graph momenta vanishes, switching the choice of connected component $C$ does not change $c_{i,j,k}$.

A reminder: the geometric results in this paper are predicated on the coefficients $c_{i,j,k}$ being sufficiently generic. In particular, for the $(3,1,3)$ double box case, when $D=1$ the $c_{i,j,k}$ are never sufficiently generic. They are sufficiently generic if $D=4$ and the momentum vectors $w_\mu$ have generic entries.

Finally, the massive second Symanzik is given by
\begin{gather*}
\Psi_{m,1,n}:= \Big(\sum m_i^2x_i\Big)\phi_{m,1,n} + \psi_{m,1,n}.
\end{gather*}
\begin{prop}\label{2symm1n}
For generic values of masses $m_i$ and momentum vectors, the massive second Symanzik can be written
\begin{gather*}
\Psi_{m,1,n}(x_1,\dots,x_{m+n+1}) = Q(x_1,\dots,x_m)(x_{m+1}+\cdots + x_{m+n+1})
\\ \hphantom{\Psi_{m,1,n}(x_1,\dots,x_{m+n+1})=}
{} +
 Q'(x_{m+2},\dots,x_{m+n+1})(x_{1}+\cdots + x_{m+1}) + x_{m+1}\sA.
\end{gather*}
Here $Q$ and $Q'$ are homogeneous of degree $2$, and
\begin{gather}\label{11a}
\sA = (x_1,\dots,x_{m+1})\begin{pmatrix}a_{1,m+1} & a_{1,m+2}& \cdots & a_{1,m+n+1} \\
a_{2,m+1}& \cdots & \cdots & a_{2,m+n+1}\\
a_{m,m+1}& \vdots & \vdots & \vdots \\
0 & a_{m+1,2} & \cdots & a_{m+1,m+n+1} \end{pmatrix}\begin{pmatrix}x_{m+1}\\x_{m+2}\\ \vdots \\x_{m+n+1}\end{pmatrix}\!.
\end{gather}
\end{prop}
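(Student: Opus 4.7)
The plan is to expand $\Psi_{m,1,n}=\bigl(\sum_i m_i^2x_i\bigr)\phi_{m,1,n}+\psi_{m,1,n}$ monomial by monomial and to classify each cubic $x_ix_jx_k$ according to how the triple of indices distributes among the three arcs of the theta graph: the ``left'' arc $L=\{1,\dots,m\}$, the middle edge $M=\{m+1\}$, and the ``right'' arc $R=\{m+2,\dots,m+n+1\}$. The aim is to show that only the seven mixed types $LLM$, $LLR$, $LMM$, $LMR$, $LRR$, $MMR$, $MRR$ appear and that, once grouped appropriately, they reproduce the three summands on the right-hand side.

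First I would eliminate the ``pure'' types $LLL$, $RRR$, $MMM$. Every quadratic monomial of $\phi_{m,1,n}$ is a product $x_ax_b$ with $a,b$ in two different arcs, so multiplication by $\sum m_i^2x_i$ cannot produce a cubic concentrated in a single arc. For $\psi_{m,1,n}$, cutting three edges from a single arc leaves the cycle formed by the other two arcs intact, so the cut graph is never a $2$-tree and the corresponding $c_{i,j,k}$ vanishes.

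The heart of the argument is to collect the $LLM$ and $LLR$ monomials into $Q(x_1,\dots,x_m)(x_{m+1}+\cdots+x_{m+n+1})$. This reduces to the claim that, for every $i,j\in L$ and every $k\in R$, the coefficient of $x_ix_jx_{m+1}$ in $\Psi_{m,1,n}$ equals the coefficient of $x_ix_jx_k$. Direct inspection of $\phi_{m,1,n}$ confirms the equality for the mass contribution. For the $\psi$ contribution the key geometric observation is that the three-edge cuts $\{e_i,e_j,e_{m+1}\}$ and $\{e_i,e_j,e_k\}$ induce the \emph{same} $2$-tree partition of the vertex set: in both cases one connected component is the sub-arc of $L$ lying strictly between the two cuts in the left arc, and the other is everything else. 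Since $c_{i,j,k}$ depends only on this partition, the two $\psi$-contributions coincide, giving the desired common coefficient and hence the factorisation. The mirror-image argument for the pair $LRR$, $MRR$ produces $Q'(x_{m+2},\dots,x_{m+n+1})(x_1+\cdots+x_{m+1})$.

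What is left are the types $LMM$, $LMR$, $MMR$, each carrying at least one factor of $x_{m+1}$. Dividing through by $x_{m+1}$ gives a form $\sA$ which is a sum of products $x_ax_b$ with $a\in\{1,\dots,m+1\}$, $b\in\{m+1,\dots,m+n+1\}$ and $(a,b)\neq(m+1,m+1)$; recording its coefficients $a_{a,b}$ recovers the matrix in~\eqref{11a}. The vanishing $(m+1,m+1)$-entry reflects the absence of any $x_{m+1}^3$ monomial in $\Psi_{m,1,n}$: $\psi_{m,1,n}$ admits no triple of identical edges, and $\phi_{m,1,n}$ has no $x_{m+1}^2$ term for the mass factor $m_{m+1}^2x_{m+1}$ to consume. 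The principal obstacle is the partition-matching step in the third paragraph; once that coincidence is in hand, the remainder is a routine accounting of masses and linear forms.
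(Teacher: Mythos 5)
Your proof is correct, but it follows a genuinely different path from the paper. The paper dispenses with this Proposition in one line, remarking only that ``the proof is straightforward using a computer'' and referring the reader to a script on the author's web page. You instead give a conceptual, hand-derived argument. Your classification of monomials into the ten types by arc, the elimination of the pure types $LLL$, $MMM$, $RRR$, and above all the observation that the cuts $\{e_i,e_j,e_{m+1}\}$ and $\{e_i,e_j,e_k\}$ (for $i,j$ on the left arc and $k$ on the right) induce the \emph{same} vertex partition --- so that $c_{i,j,m+1}=c_{i,j,k}$ because the Symanzik coefficient depends only on the partition, while the mass contribution $m_i^2+m_j^2$ is also independent of $k$ --- is exactly the structural reason the factorisation $Q(x_1,\dots,x_m)(x_{m+1}+\cdots+x_{m+n+1})$ works, and the paper never articulates it. The mirror case and the residual $x_{m+1}\mathcal A$ bookkeeping, including the vanishing $(m+1,m+1)$ entry forced by the absence of $x_{m+1}^3$, are handled correctly. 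Two very small points you gloss over: the diagonal coefficients of $Q$ (the case $i=j$, where $\psi$ contributes nothing and the matching of coefficients reduces to the mass term $m_i^2$ alone) deserve one sentence, and you do not comment on where ``generic'' enters --- your decomposition in fact holds identically in the masses and momenta, with genericity only needed later for $Q$, $Q'$ and $\mathcal A$ to be nondegenerate, which is the sense in which the paper uses the hypothesis. What your approach buys is transparency and independence of parameters $(m,1,n)$: it proves the identity for all $m,n$ at once and exposes the graph-combinatorial mechanism, whereas the paper's computer check presumably runs for fixed small values and leaves the structural explanation implicit.
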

The proof of the proposition is straightforward using a computer. The author will leave a~program on his web page in the Publications folder.

Note that the terms in $\sA$ have generic coefficients. They are linear in $x_1,\dots,x_{m+1}$ and also in $x_{m+1},\dots,x_{m+n+1}$ and in $x_{m+1}$. The quadrics $Q$ and $Q'$ have generic coefficients.

\pdfbookmark[1]{References}{ref}
\LastPageEnding

\end{document}